\newtheorem{theorem}{Theorem}[section]
\newtheorem{lemma}[theorem]{Lemma}
\newtheorem{proposition}[theorem]{Proposition}
\newtheorem{definition}{Definition}[section]
\newtheorem{remark}{Remark}[section]
\def\vp{\varphi}
\def\eq#1{(\ref{#1})}
\def\nn{\nonumber}
\def\({\left(\begin{array}{cccccc}}
\def\){\end{array}\right)}
\def\eq#1{(\ref{#1})}
\def\nn{\nonumber}
\def\({\left(\begin{array}{cccccc}}
\def\){\end{array}\right)}
\def\bes{\begin{eqnarray}}
\def\ees{\end{eqnarray}}
\newcommand{\del}{\partial}
\newcommand{\beq}{\begin{equation}}
\newcommand{\eeq}{\end{equation}}
\newcommand{\bea}{\begin{eqnarray}}
\newcommand{\eea}{\end{eqnarray}}
\newcommand{\beann}{\begin{eqnarray*}}
\newcommand{\eeann}{\end{eqnarray*}}
\newcommand{\RR}{\mathbb{R}}
\newcommand{\CC}{\mathbb{C}}
\newcommand{\bu}{{\bf u}}
\newcommand{\bx}{{\bf x}}
\DeclareMathOperator{\dv}{div}
\DeclareMathOperator\erf{erf}
\numberwithin{equation}{section}
\begin{document}

\title{On non-uniqueness for the system $\bu_t+(\bu\cdot\nabla)\bu=\mu\Delta{\bf u}$}
\author{Helge Kristian Jenssen }\thanks{H.~K.~Jenssen, Department of
Mathematics, Penn State University,
University Park, State College, PA 16802, USA ({\tt
jenssen@math.psu.edu}).}

\begin{abstract}
	Explicit irrotational solutions, obtained via the Cole-Hopf transform from the multi-d heat
	equation, give examples of non-uniqueness for the Cauchy problem in supercritical 
	$L^p$, $W^{1,p}$, and $W^{2,p}$ regimes.
	We verify non-uniqueness of the trivial solution in the sense of $L^p(\RR^n)$, whenever $n\geq2$
	and $1\leq p<n$. The same solutions give non-uniqueness in $W^{1,p}(\RR^n)$ 
	and $W^{2,p}(\RR^n)$ for $1\leq p<\frac{n}{2}$ and $1\leq p<\frac{n}{3}$, 
	respectively. The main example provides solutions which are classical  for 
	strictly positive times, and vanish in the stated norms, but explode in $L^\infty(\RR^n)$,
	as $t\to0+$. The non-uniqueness is unrelated to the 
	Tikhonov non-uniqueness phenomenon for the heat equation. 

\bigskip 

MSC2020: 35A02, 35K40, 35K59
\end{abstract}

\maketitle

\section{Introduction}\label{rad_ch}
\subsection{The Cole system and the Cole-Hopf transform}
We consider the Cauchy problem for the $n$-dimensional semilinear parabolic system 
\beq\label{c}
	\bu_t+(\bu\cdot\nabla)\bu=\mu\Delta{\bf u},
\eeq
where 
\[\bu=(u_1,\dots,u_n)^T:[0,\infty)\times\RR^n_{\bf x}\to\RR^{n\times1}_\bu.\]
Here $(\bu\cdot \nabla)\bu\equiv (D_\bx\bu) \bu$, where $D_\bx\bu=$ 
Jacobian matrix, and $\Delta$ acts component-wise; note that $\bu$ and $\bx$ 
are both $n$-dimensional. The constant $\mu>0$ is fixed throughout.

The system \eq{c} is usually called the vector Burgers equation or the $n$-dimensional 
Burgers system. We will refer to it as the {\em Cole system} and its solutions as {\em Cole solutions}: As far as 
we known, \eq{c} was first considered by Cole \cite{cole}. 
He observed that irrotational solutions, i.e., $\del_{x_i}u_j\equiv\del_{x_j}u_i$, 
can be obtained via the Cole-Hopf transform
\beq\label{ch}
	{\bu}(t,{\bf x})=-\frac{2\mu}{\theta(t,{\bf x})}\nabla\theta(t,{\bf x}),
\eeq
where $\theta(t,\bx)$ is a solution of the scalar $n$-dimensional heat equation
\beq\label{heat}
	\theta_t=\mu\Delta \theta.
\eeq
Note that ${\bu}$ in \eq{ch} is in general singular at points where $\theta$ vanishes.

In this note we give explicit examples of radial solutions to \eq{c} of the form 
\eq{ch} which attain vanishing initial data in the sense of $L^p$ whenever 
$n\geq2$ and $1\leq p<n$. The same solutions give non-uniqueness also in 
Sobolev spaces; e.g., in
$W^{1,p}(\RR^n)$ and $W^{2,p}(\RR^n)$ for $1\leq p<\frac{n}{2}$ and 
$1\leq p<\frac{n}{3}$, respectively. 

The Cole system \eq{c} enjoys a parabolic scaling invariance: 
If $\bu(t,\bx)$ is a solution to \eq{c}, then so is $\bu_\lambda(t,\bx):=\lambda \bu(\lambda^2 t,\lambda\bx)$, 
for any $\lambda>0$. We have 
\[\|\bu_\lambda(t)\|_{L^p(\RR^n)}=\lambda^{1-\frac{n}{p}}\|\bu(t)\|_{L^p(\RR^n)}\qquad\text{and}\qquad
\|\bu_\lambda(t)\|_{\dot W^{k,p}(\RR^n)}=\lambda^{k+1-\frac{n}{p}}\|\bu(t)\|_{\dot W^{k,p}(\RR^n)},\]
so that $p=n$ and $p=\frac{n}{k+1}$, are  $L^p$- and $W^{k,p}$-critical for \eq{c}, respectively.
Ill-posedness in the supercritical regimes $1\leq p<n$ and $1\leq p<\frac{n}{k+1}$ is therefore not surprising,
cf.\ Section 3.1 in \cite{tao}. 

Our main points are to provide simple concrete examples of ill-posedness
in the form of non-uniqueness, and to verify that this non-uniqueness 
is not ``caused'' by non-uniqueness for the heat equation (Tikhonov phenomenon; see 
Remark \ref{tikh}).

\subsection{Setup and results}
We consider radial, and thus irrotational, solutions to \eq{c} of the form
\beq\label{rad}
	{\bf u}(t,{\bf x})=u(t,r)\hat{\bf r},\qquad \hat{\bf r}
	=\textstyle\frac{{\bf x}}{r},\quad r=|{\bf x}|.
\eeq
The primary goal is to demonstrate, by concrete examples of the form \eq{rad}, that there is
an infinity of non-trivial solutions $\bu\not\equiv\bf 0$ to \eq{c} which attain vanishing initial data in 
the sense of $L^p$ whenever $n\geq2$ 
and $1\leq p<n$. 

The corresponding radial heat functions $\theta(t,r)$ will be smooth and strictly positive
solutions of \eq{heat} on $(0,\infty)\times \RR$. In the main example 
(Section \ref{Lp_non-uniq}), these generate classical and everywhere-defined 
solutions $\bu(t,\bx)$ to \eq{c} on $(0,\infty)\times \RR^n$ via \eq{rad}. 
While these Cole solutions satisfy $\bu(t)\to0$ as $t\to0+$ in $L^p(\RR^n)$ with $1\leq p<n$
(Lemma \ref{prop_1}), they suffer blowup in $L^\infty(\RR^n)$ as $t\to0+$.

Two further cases of non-uniqueness are obtained by considering 
radial self-similar and radial stationary solutions (Section \ref{rad_ss_statn}). However, these examples exhibit 
singularities either in their initial data or along $\{t>0,\bx=\bf0\}$.

One source of interest in the system \eq{c} is its formal likeness to 
the Navier-Stokes and (for $\mu=0$) Euler systems, in which case 
$\bu$ in \eq{c} is thought of as a velocity field of a fluid. For a radial field \eq{rad} it is then 
natural to require that $\bu(t,{\bf 0})\equiv{\bf 0}$ due to symmetry.
This is indeed the case in our main example of non-uniqueness in  Section \ref{Lp_non-uniq}. 
However, the fluid analogy should not be taken too literally: The 
absence  of a pressure term in \eq{c} leaves its predictive power for Navier-Stokes doubtful;
see Remark \ref{sing_form}.

A general result for symmetrizable parabolic systems guarantees local existence and uniqueness
of $H^s(\RR^n)$-solutions whenever $s>\frac{n}{2}+1$; see \cites{vh,kaw,ser,tay}.
(The works \cites{vh,kaw,ser} consider a wider class of symmetrizable 
hyperbolic-parabolic systems, not necessarily in divergence form, which
includes the compressible Navier-Stokes and MHD systems.) 
We note that  \eq{c} may be written in the form
\beq\label{non_div_syst}
	\bu_t+\sum_{i=1}^n{\bf A}_i(\bu)\bu_{x_i}=\mu\Delta \bu,\qquad {\bf A}_i(\bu)=u_i{\bf I}_{n}
\eeq
(${\bf I}_n=n\times n$ identity matrix), 
so that the Cole system is a concrete case of a multi-d symmetric semilinear and 
uniformly parabolic system.
As is clear from above, the observed cases of non-uniqueness occur in spaces
that are much larger than $H^s(\RR^n)$ with $s>\frac{n}{2}+1$. On the other hand, we are not aware 
of examples or general results that would narrow the gap.

Unsurprisingly, all examples of non-uniqueness that we consider requires $n\geq2$, i.e., 
they apply to multi-d systems of equations. Unless explicitly stated otherwise, 
$n\geq2$ is assumed in what follows.

\subsection{Radial Cole solutions}
A calculation shows that if a solution ${\bf u}$ to \eq{c} 
is of the radial form \eq{rad}, then $u(t,r)$ solves the $n$-dimensional 
radial Cole equation\footnote{Note that \eq{c_rad} may be 
written in divergence form, viz.\ 
$u_t+(\frac{1}{2}u^2)_r=\mu\big(u_r+\textstyle\frac{n-1}{r}u\big)_r$.
More generally,  within the 
class of irrotational solutions, \eq{c} may be re-written in divergence form as
$\bu_t+\dv(\frac{1}{2}|\bu|^2{\bf I}_n)=\mu\Delta\bu$.}
\beq\label{c_rad}
	u_t+uu_r=\mu\big(u_r+\textstyle\frac{n-1}{r}u\big)_r,
	\qquad u:\RR^+_t\times\RR^+_r\to\RR.
\eeq
For radial solutions the Cole-Hopf transform \eq{ch} gives 
that any solution $\theta(t,r)$ of the $n$-dimensional radial heat equation
\beq\label{rad_heat}
	\theta_t=\mu\big(\theta_{rr}+\textstyle\frac{n-1}{r}\theta_r\big),
\eeq
yields a  solution 
\beq\label{ch_rad}
	u=-2\mu\textstyle\frac{\theta_r}{\theta}
\eeq
to \eq{c_rad}, and thus a solution ${\bf u}(t,{\bf x})$ to \eq{c} via \eq{rad}.

The last statement is a little vague as we have not specified in what
sense the resulting ${\bf u}(t,{\bf x})$ should solve \eq{c}. There are 
several facets to this. First, as noted, Cole solutions of the form \eq{ch} 
may suffer singularities where $u(t,r)$ blows up in amplitude
due to vanishing of $\theta(t,r)$; for concrete 1-d examples, see \cite{bp}
and Remarks \ref{tikh}-\ref{sing_form} below.
Next, the Cole-system \eq{c} in dimensions $n\geq2$ is not in divergence form, 
and therefore does not admit an obvious notion of weak solutions (this issue is 
carefully discussed  in \cite{jlz}). 
Finally, for the particular case of radial solutions, there is the issue of 
what type of behavior one should allow as $r\to0+$ at positive times. 

In the {\em ad hoc} Definition \ref{acceptable} we delimit the class of solutions we are primarily 
interested in: An ``$L^p$-acceptable'' solution of a Cauchy problem is required to 
solve \eq{c} in a pointwise classical sense for $t>0$, while attaining the initial 
data in $L^p$. In particular, a radial solution $\bu(t,\bx)$ of this type must necessarily 
vanish along $\RR^+\times \{\bf0\}$. 
As the initial data are required to be attained only in the sense of $L^p$, blowup in 
amplitude as $t\to0+$ is allowed.

We note that the heat functions (i.e., solutions of \eq{heat}) we make use 
of are elementary, and the corresponding solutions to \eq{c} are known in the literature,
for both the 1-d and multi-d cases \cites{bp,light,js}. 
Our main example is analyzed in Section \ref{Lp_non-uniq} where we detail the 
$L^p$-estimates (for $1\leq p<n$) of the Cole solution given by \eq{soln_ex1}-\eq{u_def}. 
The result is summarized in Proposition \ref{main}. 

More singular examples, which do not satisfy Definition \ref{acceptable}, 
are analyzed briefly in Section \ref{rad_ss_statn}. In particular, we obtain $L^p$
non-uniqueness (for $n=3$) when the initial datum is a certain stationary 
profile for \eq{c}. General stationary solutions have recently 
been analyzed in detail in \cite{jlz}, where various stability results for such 
solutions are established. Our findings are not at variance with these results:
The instability we observe takes place in spaces that are larger than the ones 
considered in \cite{jlz}.

Before proceeding we comment briefly on how the non-uniqueness
comes about, explaining why it is not simply non-uniqueness for \eq{heat}
``translated'' to \eq{c} via Cole-Hopf.

For our main example we start from two distinct initial conditions for the $n$-dimensional 
heat equation \eq{heat}, viz.\ $\theta(0)\equiv a$ and $\theta(0)= a+\delta_{\bf 0}$, where $a>0$
is a constant.
These data generate two distinct heat functions $\theta\equiv a$ and $\theta=a+G_n$, 
where $G_n$ denotes the $n$-dimensional heat kernel. In turn, \eq{ch} yield two distinct Cole 
solutions which, however, agree at time zero in the sense of $L^p(\RR^n)$ whenever $1\leq p<n$. 
The Cole solution corresponding to $\theta\equiv a$
is the trivial solution $\bu\equiv \bf 0$; the one corresponding to $\theta=a+G_n$ is given in \eq{soln_ex1}-\eq{u_def} below;
a particular case is displayed in Figure \ref{Figure_1}.

\begin{remark}[Non-unique Cole solutions and the Tikhonov phenomenon]\label{tikh}
	It is well-known that the Cauchy problem for the heat equation admits multiple 
	solutions unless growth conditions are imposed (Tikhonov phenomenon, 
	see, e.g.\ Section 5.5.1 in \cite{dib}). This is therefore a possible source, via 
	the Cole-hopf transform, of non-uniqueness for the Cole system \eq{c}. 
	However, as outlined above, the two Cole solutions we consider are obtained from 
	distinct initial data for the heat equation \eq{heat}. Therefore, the particular type
	of non-uniqueness we exhibit for \eq{c} is unrelated to the Tikhonov phenomenon.

	Of course, if $\theta_1,\theta_2$ are distinct heat functions with identical initial data, 
	then these will generate, via the Cole-Hopf transform, distinct solutions $\bu_1,\bu_2$ to \eq{c} with identical 
	initial data. However, according to Widder's theorem (p.\ 134 in \cite{wid})\footnote{Widder's 
	formulation in \cite{wid} applies to 1-d non-negative heat functions continuous on $[0,\infty)\times\RR$;
	later extensions by Aronson and others cover more general parabolic equations 
	and allows for measures in the initial data, see \cites{aron}.}, 
	one of $\theta_1,\theta_2$ must change sign for $t>0$. It follows that one of $\bu_1,\bu_2$ must contain 
	singularities and fail to be everywhere defined on $(0,\infty)\times \RR^n$. The latter scenario is
	not pursued in the present work.
\end{remark}

\begin{remark}[Singularity formation]\label{sing_form}
	The work \cite{ps} shows how, for {\em complex-valued} solutions $u\in\CC$, 
	the scalar Burgers equation (i.e., \eq{c} with $n=1$) can develop $L^\infty$-blowup 
	from $C^\infty_c$-data. 
	(The corresponding $2\times2$-system for the real 
	and imaginary parts of $u$ is not of parabolic or  hyperbolic-parabolic type.)
	The maximum principle for \eq{heat} prevents such behavior for real-valued and 
	irrotational solutions of \eq{c}. 
	
	Concerning amplitude blowup, the recent works \cites{mrrs1,mrrs2} 
	have shown how radially focusing wave solutions to the compressible Navier-Stokes
	system can generate amplitude blowup in finite time. The solutions are perturbations 
	of self-similar focusing Euler flows.
	It is instructive to see how the maximum principle prevents such behavior 
	in radial Cole solutions $\bu=u\hat{\bf r}$. As $u$ itself gives the convective speed of propagation, 
	the idea would be to have a big, negative ``down-bump'' in $u$ drive itself toward $r=0$.
	However, the minimum value in such a solution cannot decrease: 
	At the point of minimum we have $u_r=0$, so that \eq{c_rad} gives 
	$u_t=\mu u_{rr}-\frac{(n-1)u}{r^2}>0$ there, and blowup is averted.
	
	This provides a concrete example of how the Cole system fails to capture 
	a fundamental property of the Navier-Stokes system. 
	(It is still conceivable that the Cole system can admit singularity 
	formation in non-radial solutions.)
\end{remark}


%


{\bf Notation:} We set $\RR^+:=(0,\infty)$ and $\RR^+_0:=[0,\infty)$.
For real-valued functions $f$ of the time variable $t$ 
and the spatial variable $\bx\in\RR^n$ or $r\in\RR^+$, the 
time argument is always listed first. E.g., if $f$ is defined for positive times, then 
$f:\RR^+_t\times\RR^n_\bx\to\RR$ or $f:\RR^+_t\times\RR_r\to\RR$ (and similarly for
vector valued functions).
For $\bu:\RR^+_0\times\RR^n\to\RR^n$ and $p\geq 1$, let 
\[\|\bu(t)\|_p=\Big(\int_{\RR^n}|\bu (t,\bx)|^p\, d\bx\Big)^\frac{1}{p},\]
where $|\cdot|$ denotes the Euclidean norm on $\RR^n$. We write
$A\lesssim B$ to mean that there is a finite constant $C>0$ so that $A\leq CB$.

%


\section{Non-uniqueness in $L^p$}\label{Lp_non-uniq}
In this section we consider radial Cole solutions $\bu(t,\bx)$ that solve \eq{c} in a classical sense
for $t>0$, and which attain their initial data in the sense of $L^p$.
For lack of a better terminology, we refer to such solutions as ``$L^p$-acceptable.''
(This notion is relaxed in Section \ref{rad_ss} when we consider stationary 
and self-similar solutions.)

\begin{definition}[$L^p$-acceptable solutions]\label{acceptable}
	Given a measurable function $\bu_0:\RR^n\to\RR^n$ and $p\geq1$.
	We say that $\bu:\RR^+\times\RR^n\to\RR^n$ is an {\em $L^p$-acceptable solution} 
	to the Cauchy problem for the Cole system {\em \eq{c}} with initial data $\bu_0$, provided:
	\begin{enumerate}
		\item[(i)] $\bu_t,D_\bx\bu,\Delta\bu\in C(\RR^+\times\RR^n)$;
		\item[(ii)] $\bu$ satisfies \em{\eq{c}}  classically at all points $(t,\bx)\in\RR^+\times\RR^n$;
		\item[(iii)] $\bu(t)\to\bu_0$ in $L^p(\RR^n)$ as $t\to0+$.
	\end{enumerate}
\end{definition} 
\begin{remark}\label{rmk_defn}
	No further condition is imposed on the initial data $\bu_0$ beyond measurability; in particular,
	$\bu_0$ could be unbounded and/or not an element of $L^p(\RR^n)$. (For our main example
	$\bu_0\equiv \bf0$, while an unbounded $\bu_0$ is considered in Section \ref{rad_ss_statn}.)
	
         We note that any $L^p$-acceptable solution $\bu$ of the  radial form \eq{rad} 
         must necessarily vanish along $\RR^+\times\{{\bf0}\}$: 
         $\bu=u\frac{\bx}{|\bx|}$ itself is continuous at all points $(t,{\bf 0})$ ($t>0$) according to (i), and 
         the discontinuity of $\bx\mapsto\frac{\bx}{|\bx|}$
         at the origin therefore requires that $\bu(t,{\bf 0})={\bf 0}$ for all $t>0$.
\end{remark}

It is immediate that ${\bf u}\equiv{\bf 0}$ is an $L^p$-acceptable 
radial solution with data ${\bf u}_0\equiv{\bf 0}$, for any $p\geq1$.
We proceed to show that it is not the unique $L^p$-acceptable solution with vanishing data,
whenever $1\leq p<n$. 
For this, consider the solution 
\beq\label{heat_soln}
	\theta(t,r)=a+G_n(t,r)
\eeq
to \eq{rad_heat}, where $a>0$ is a constant and $G_n(t,r)$ denotes the $n$-dimensional 
heat kernel
\beq\label{heat_kernel_n}
	G_n(t,r)=\textstyle\frac{1}{(4\pi\mu t)^\frac{n}{2}}\exp(-\frac{r^2}{4\mu t}).
\eeq
As $\theta$ solves \eq{rad_heat}, the Cole-Hopf 
transform yields the positive solution
\beq\label{soln_ex1}
	\bar u(t,r)=\frac{r}{t\left[1+a(4\pi\mu t)^\frac{n}{2}\exp(\frac{r^2}{4\mu t})\right]}
\eeq
to the radial Cole equation \eq{c_rad} on $\RR^+\times\RR^+$. 
We note that, while
\[\lim_{{r\to\bar r}\atop{t\to0+}}\bar u(t,r)=0 \qquad\text{for $\bar r>0$ fixed,}\]
we have, e.g., $\bar u(t,\sqrt{t})\to\infty$ as $t\to0+$. The convergence to vanishing 
initial data is therefore pointwise but not uniform.
A plot of the solution \eq{soln_ex1} with $n=3$, $\mu=0.1$ and 
$a=1$ is given in Figure \ref{Figure_1};  evidently, it provides an example of  
a spherically expanding viscous wave which slows down and decays as it 
moves outward.
\begin{figure}
	\centering
	\includegraphics[width=9cm,height=9cm]{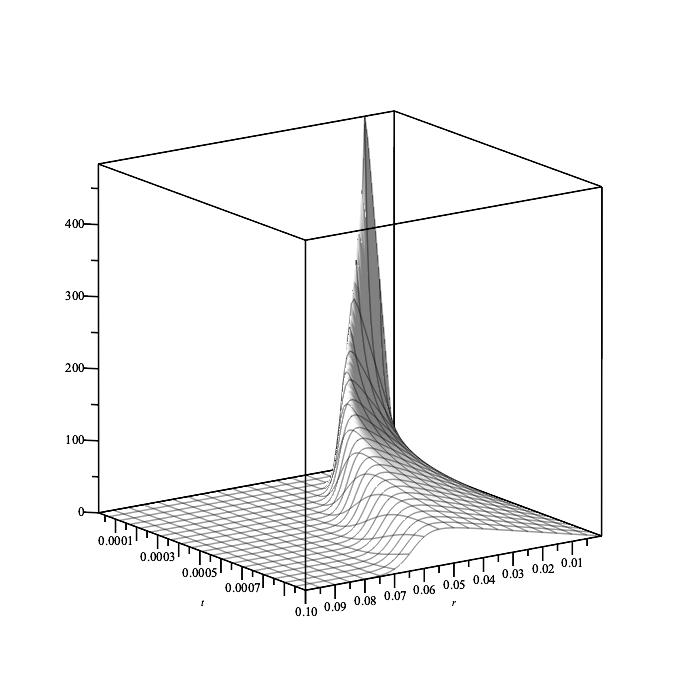}
	\caption{Maple plot of the solution \eq{soln_ex1} in the case $n=3$, with $\mu=0.1$ and $a=1$.
	The plot is for $0.0001\leq r\leq 0.1$ and $0.00002\leq t\leq 0.001$.}\label{Figure_1}
\end{figure} 

We now define $\bar \bu:\RR^+\times\RR^n\to\RR^n$ by setting
\beq\label{u_def}
	\bar \bu(t,\bx):=\left\{
	\begin{array}{ll}
		\bar u(t,|\bx|)\textstyle\frac{\bx}{|\bx|} & \text{for $t>0,|\bx|>0$}\\\\
		{\bf 0} & \text{for $t>0,\bx={\bf 0}$.}
	\end{array}\right.
\eeq 
The claim is that $\bar \bu$ is an $L^p$-acceptable solution to \eq{c} with initial data $\bar \bu_0\equiv0$ 
according to Definition \ref{acceptable}.

By construction we have that $\bar \bu(t,\bx)$ is smooth and satisfies \eq{c} 
classically on $\RR^+\times (\RR^n\smallsetminus\{{\bf0}\})$.
It remains to verify that $\bar \bu_t, D\bar \bu,\Delta\bar \bu$ are continuous at points along $\RR^+\times\{{\bf0}\}$,
that  \eq{c} is classically satisfied there, and that condition (iii) in Definition \ref{acceptable}
is met. We start with the last issue, for which it is convenient to
formulate two auxiliary lemmata.
\begin{lemma}\label{lem_1}
	For constants $n\geq1,k>-1, b>0, q>0$, $\ell>0$ let
	\[I(t):=t^q\int_0^\infty\frac{s^k}{(1+bt^\frac{n}{2}\exp(s))^\ell}\, ds.\]
	Then
	\[\lim_{t\to0+} I(t)=0.\]
\end{lemma}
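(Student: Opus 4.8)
The plan is to bound the bare integral $J(t):=\int_0^\infty s^k(1+bt^{n/2}e^s)^{-\ell}\,ds$ from above by a negative power of $t$ and then let the prefactor $t^q$ win. The reason one cannot simply pass to the limit inside the integral is that, for each fixed $s$, the integrand tends to $s^k$ as $t\to0+$, and $s^k$ is \emph{not} integrable on $(0,\infty)$ (it blows up at $s=\infty$ because $k>-1$); so dominated convergence is unavailable and $J(t)$ genuinely diverges as $t\to0+$. The content of the lemma is that this divergence is slow enough to be absorbed by $t^q$.

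The key step is the elementary inequality
\[(1+x)^\ell\ \ge\ x^\alpha\qquad\text{for every }x>0\text{ and every }\alpha\in(0,\ell]\]
(if $x\ge1$ then $(1+x)^\ell\ge x^\ell\ge x^\alpha$; if $0<x<1$ then $(1+x)^\ell>1>x^\alpha$). Applying it with $x=bt^{n/2}e^s$ turns the denominator into $(1+bt^{n/2}e^s)^\ell\ge b^\alpha t^{\alpha n/2}e^{\alpha s}$, whence
\[J(t)\ \le\ \frac{1}{b^\alpha t^{\alpha n/2}}\int_0^\infty s^k e^{-\alpha s}\,ds\ =\ \frac{\Gamma(k+1)}{b^\alpha\alpha^{k+1}}\,t^{-\alpha n/2},\]
and the $s$-integral is finite precisely because $k>-1$ (integrability at $0$) and $\alpha>0$ (decay at $\infty$) — this is where these two hypotheses are used, together with $b>0$.

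To conclude, multiply by $t^q$ to get $I(t)\le C\,t^{\,q-\alpha n/2}$ with $C=\Gamma(k+1)/(b^\alpha\alpha^{k+1})$. Since $q>0$ and $n\ge1$, one may fix once and for all $\alpha:=\min\{\ell,\,q/n\}$, which is positive, satisfies $\alpha\le\ell$ (making the displayed inequality legitimate), and satisfies $\alpha n/2\le q/2<q$; hence the exponent $q-\alpha n/2$ is strictly positive and $I(t)\to0$ as $t\to0+$. I do not expect any real obstacle here beyond noticing that dominated convergence fails and picking the trade-off exponent $\alpha$ correctly. A less efficient alternative — splitting the integral at $s_0:=-\log(bt^{n/2})$, which gives the cruder bound $J(t)\lesssim s_0^{k+1}+1\sim\big(\log\tfrac1t\big)^{k+1}$ — would also work, but the sub-power trick above avoids logarithms entirely and leaves plenty of room.
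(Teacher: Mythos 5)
Your proof is correct, and it takes a genuinely different (and somewhat cleaner) route than the paper's. The paper splits the integral at $s_0=\log\frac{1}{A(t)}$ with $A(t)=bt^{n/2}$: on $[0,s_0]$ it simply drops the denominator, paying a factor $(\log\frac1t)^{k+1}$ that $t^q$ absorbs, and on $[s_0,\infty)$ it bounds $s^k\le e^{\beta s}$ for $0<\beta<\min(\frac{2q}{n},\ell)$ and changes variables to get a convergent integral times $t^{q-\beta n/2}$. Your single global inequality $(1+x)^\ell\ge x^\alpha$ for $\alpha\in(0,\ell]$, applied with $x=bt^{n/2}e^s$, collapses both regimes into one Gamma-function bound $I(t)\le \Gamma(k+1)b^{-\alpha}\alpha^{-(k+1)}t^{\,q-\alpha n/2}$, and the choice $\alpha=\min\{\ell,q/n\}$ makes the exponent strictly positive; this avoids the case split and the logarithmic bookkeeping entirely, at the modest cost of being slightly less transparent about where the integrand actually transitions from ``essentially $s^k$'' to ``exponentially damped.'' Your closing remark correctly identifies the paper's strategy as the ``less efficient alternative.'' One cosmetic quibble: for $-1<k<0$ the pointwise limit $s^k$ does not ``blow up at $s=\infty$'' --- it merely fails to be integrable there --- but this is in your motivational aside and does not affect the argument.
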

\begin{remark}
	The limit is of the indeterminate form ``$\, 0\cdot\infty$.''
\end{remark}
\begin{proof}
	With $A(t):=bt^\frac{n}{2}$ and for times $t>0$ so small that 
	\[\log\textstyle \frac{1}{A(t)}>0,\]
	we write
	\[I(t)=t^q\int_0^{\log \frac{1}{A(t)}} \frac{s^k}{(1+A(t)\exp(s))^\ell}\, ds
	+t^q\int_{\log \frac{1}{A(t)}}^\infty \frac{s^k}{(1+A(t)\exp(s))^\ell}\, ds=:I_1(t)+I_2(t).\]
	With the stated assumptions we have
	\[I_1(t)< t^q\int_0^{\log \frac{1}{A(t)}} s^k\, ds=\frac{t^q}{k+1}
	\left(\log \textstyle\frac{1}{A(t)}\right)^{k+1}
	\propto t^q(-\log b-\textstyle\frac{n}{2}\log t)^{k+1}\to 0,\]
	as $t\to0+$. 
	To estimate $I_2(t)$ we first fix $\beta$ with $0<\beta<\min(\frac{2q}{n},\ell)$. 
	There is then an $s^*$ (depending on $n,k,q,\ell$) so that
	\[s^k\leq \exp(\beta s)\qquad \text{whenever $s>s^*$.}\]
	For times $t>0$ so small that 
	\[\log\textstyle \frac{1}{A(t)}>s^*,\]
	we therefore have (changing integration variable to $\xi=s-\log \frac{1}{A(t)}$)
	\begin{align}
		I_2(t)&=t^q\int_{\log \frac{1}{A(t)}}^\infty \frac{s^k}{(1+A(t)\exp(s))^\ell}\, ds\nn\\
		&\leq t^q\int_{\log \frac{1}{A(t)}}^\infty \frac{\exp(\beta s)}{(1+A(t)\exp(s))^\ell}\, ds\nn\\
		&\propto t^{q-\frac{\beta n}{2}}\int_0^\infty\frac{\exp(\beta \xi)}{(1+\exp(\xi))^\ell}\, d\xi
		\leq t^{q-\frac{\beta n}{2}}\int_0^\infty e^{(\beta-\ell) \xi}\, d\xi.
	\end{align}
	By the choice of $\beta$, the last integral is finite and $q>\frac{\beta n}{2}$, so that 
	$I_2(t)\to 0$ as $t\to0+$.
\end{proof}

\begin{lemma}\label{lem_2}
	For constants $n\geq1$, $b>0$, $c>-1$, $\ell> 0$, and $d>-\frac{c+1}{2}$,
	let
	\[J(t):=t^d\int_0^\infty\frac{r^c}{(1+bt^\frac{n}{2}\exp(\frac{r^2}{4\mu t}))^\ell}\, dr,\]
	Then
	\[\lim_{t\to0+} J(t)=0.\]
\end{lemma}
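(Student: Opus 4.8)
The plan is to reduce Lemma \ref{lem_2} directly to Lemma \ref{lem_1} by the change of variables $s = \frac{r^2}{4\mu t}$, which is the natural substitution since it converts the Gaussian-type exponent $\frac{r^2}{4\mu t}$ appearing inside the bracket into the clean exponent $\exp(s)$ appearing in $I(t)$.

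Concretely, set $s = \frac{r^2}{4\mu t}$, so that $r = (4\mu t)^{1/2} s^{1/2}$ and $dr = \frac{(4\mu t)^{1/2}}{2} s^{-1/2}\, ds$. Then $r^c = (4\mu t)^{c/2} s^{c/2}$, and substituting into $J(t)$ gives
\[
J(t) = t^d \cdot (4\mu)^{(c+1)/2}\, t^{(c+1)/2} \cdot \tfrac12 \int_0^\infty \frac{s^{c/2} s^{-1/2}}{(1 + b t^{n/2}\exp(s))^\ell}\, ds
= \tfrac12 (4\mu)^{(c+1)/2}\, t^{\,d + \frac{c+1}{2}} \int_0^\infty \frac{s^{\frac{c-1}{2}}}{(1+b t^{n/2}\exp(s))^\ell}\, ds.
\]
Up to the harmless positive constant $\tfrac12(4\mu)^{(c+1)/2}$, this is exactly $I(t)$ from Lemma \ref{lem_1} with the parameter identifications $q = d + \frac{c+1}{2}$, $k = \frac{c-1}{2}$, and with the same $n$, $b$, $\ell$. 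I would then simply check that the hypotheses of Lemma \ref{lem_1} are satisfied under the hypotheses of Lemma \ref{lem_2}: the condition $d > -\frac{c+1}{2}$ is precisely $q > 0$; the condition $c > -1$ gives $\frac{c-1}{2} > -1$, i.e. $k > -1$; and $b > 0$, $\ell > 0$, $n \geq 1$ carry over verbatim. Hence Lemma \ref{lem_1} applies and yields $\lim_{t\to0+} J(t) = 0$.

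There is no real obstacle here — the only points requiring a word of care are that the substitution is a genuine $C^1$ diffeomorphism of $(0,\infty)$ onto itself for each fixed $t>0$ (so the integral transforms without boundary contributions or measure-theoretic subtleties), and that one should translate each inequality on the parameters correctly so that \emph{all} of $k > -1$, $q > 0$ are verified rather than just assumed. I would present this as a short paragraph: state the substitution, display the resulting identity $J(t) = C \cdot I(t)$ with $C = \tfrac12(4\mu)^{(c+1)/2}$ and the explicit parameter dictionary, observe that $q,k > -1$ (indeed $q>0$) follow from $d > -\frac{c+1}{2}$ and $c > -1$, and invoke Lemma \ref{lem_1} to conclude.
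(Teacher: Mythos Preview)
Your proposal is correct and is essentially identical to the paper's own proof: the paper also performs the substitution $s=\frac{r^2}{4\mu t}$, obtains $J(t)\propto t^{d+\frac{c+1}{2}}\int_0^\infty s^{\frac{c-1}{2}}(1+bt^{n/2}e^s)^{-\ell}\,ds$, and invokes Lemma~\ref{lem_1} with $q=d+\frac{c+1}{2}$ and $k=\frac{c-1}{2}$. Your additional verification that $c>-1$ and $d>-\frac{c+1}{2}$ translate exactly into $k>-1$ and $q>0$ is a welcome detail that the paper leaves implicit.
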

\begin{proof}
	Using the variable $s=\frac{r^2}{4\mu t}$ we have
	\[J(t)\propto t^{d+\frac{c+1}{2}}\int_0^\infty \frac{s^\frac{c-1}{2}}{(1+bt^\frac{n}{2}\exp(s))^\ell}\, ds,\]
	and the claim follows by applying Lemma \ref{lem_1} with $q=d+\frac{c+1}{2}$ and $k=\frac{c-1}{2}$.
\end{proof}

\begin{lemma}\label{prop_1}
	The function $\bar {\bf u}(t,{\bf x})$ defined in \eq{soln_ex1}-\eq{u_def} 
	satisfies condition {\em (iii)} in Definition \ref{acceptable} with initial data 
	$\bar \bu_0\equiv{\bf 0}$ whenever $n\geq 2$ and $1\leq p<n$.
\end{lemma}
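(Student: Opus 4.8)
The plan is to show directly that $\|\bar\bu(t)\|_p\to 0$ as $t\to 0+$ by reducing the $L^p$-norm of $\bar\bu$ to a radial integral of exactly the type controlled by Lemma \ref{lem_2}. Since $\bar\bu(t,\bx)=\bar u(t,|\bx|)\frac{\bx}{|\bx|}$ away from the origin and $|\frac{\bx}{|\bx|}|=1$, we have $|\bar\bu(t,\bx)|=|\bar u(t,|\bx|)|=\bar u(t,|\bx|)$ (recall $\bar u>0$ from \eq{soln_ex1}). Passing to polar coordinates,
\[
	\|\bar\bu(t)\|_p^p=\omega_{n-1}\int_0^\infty \bar u(t,r)^p\, r^{n-1}\, dr,
\]
where $\omega_{n-1}$ is the surface measure of the unit sphere in $\RR^n$. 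Substituting the explicit formula \eq{soln_ex1} gives
\[
	\|\bar\bu(t)\|_p^p=\omega_{n-1}\, t^{-p}\int_0^\infty \frac{r^{p+n-1}}{\left[1+a(4\pi\mu t)^{\frac{n}{2}}\exp(\frac{r^2}{4\mu t})\right]^p}\, dr.
\]

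Next I would match this against Lemma \ref{lem_2}. The lemma handles $J(t)=t^d\int_0^\infty r^c\,(1+bt^{\frac{n}{2}}\exp(\frac{r^2}{4\mu t}))^{-\ell}\,dr$ under the conditions $b>0$, $c>-1$, $\ell>0$, $d>-\frac{c+1}{2}$. Here the correspondence is $b=a(4\pi\mu)^{\frac{n}{2}}>0$, $\ell=p>0$, $c=p+n-1$, and $d=-p$. The constraint $c>-1$ reads $p+n-1>-1$, i.e. $p+n>0$, which holds trivially since $p\geq 1$ and $n\geq 2$. The decisive constraint is $d>-\frac{c+1}{2}$, i.e. $-p>-\frac{p+n}{2}$, equivalently $2p<p+n$, equivalently $p<n$ — precisely the hypothesis of the lemma. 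Thus for $1\leq p<n$, Lemma \ref{lem_2} applies and yields $\|\bar\bu(t)\|_p^p=\omega_{n-1}J(t)\to 0$ as $t\to 0+$, hence $\|\bar\bu(t)\|_p\to 0$, which is exactly condition (iii) of Definition \ref{acceptable} with $\bar\bu_0\equiv\mathbf{0}$.

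The only genuinely delicate point is making sure the polar-coordinate computation is legitimate, i.e. that the integral $\int_0^\infty \bar u(t,r)^p r^{n-1}\,dr$ is finite for each fixed $t>0$ (otherwise the manipulation is vacuous). This is immediate from the exponential in the denominator: for fixed $t>0$ the integrand decays like $r^{p+n-1}\exp(-\frac{pr^2}{4\mu t})$ as $r\to\infty$ and is bounded near $r=0$, so the integral converges; and this finiteness is in any case subsumed by the estimates carried out in the proof of Lemma \ref{lem_1}, which already show the relevant $J(t)$ is finite for small $t>0$. So there is no real obstacle here; the substance of the lemma is entirely the bookkeeping that the scaling exponents line up, and that bookkeeping is exactly what the hypothesis $p<n$ encodes.
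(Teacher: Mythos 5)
Your proposal is correct and follows exactly the paper's argument: reduce $\|\bar\bu(t)\|_p^p$ to the radial integral $t^{-p}\int_0^\infty r^{p+n-1}\bigl(1+a(4\pi\mu t)^{n/2}\exp(\frac{r^2}{4\mu t})\bigr)^{-p}\,dr$ and apply Lemma \ref{lem_2} with $b=a(4\pi\mu)^{n/2}$, $d=-p$, $c=p+n-1$, $\ell=p$. Your explicit check that $d>-\frac{c+1}{2}$ is equivalent to $p<n$ is the same bookkeeping the paper leaves implicit.
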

\begin{proof}	
The claim is that
\[\int_0^\infty |\bar u(t,r)|^pr^{n-1}\, dr\to0
		\qquad\text{as $t\to0+$.}\]
Substituting from \eq{soln_ex1} we have 
\beq\label{key_int}
	\int_0^\infty |\bar u(t,r)|^pr^{n-1}\, dr
	=t^{-p}\int_0^\infty \frac{r^{p+n-1}\, dr}{\big(1+a(4\pi\mu t)^\frac{n}{2}
	\exp(\frac{r^2}{4\mu t})\big)^p}.
\eeq
Applying Lemma \ref{lem_2} with $b=a(4\pi\mu)^\frac{n}{2}$, $d=-p$, $c=p+n-1$, and
$\ell=p$, 
and observing that the restrictions on the parameters in Lemma \ref{lem_2} 
are met because of the assumption $1\leq p<n$, we obtain the claim.
\end{proof}

Next, consider the behavior of $\bar \bu(t,\bx)$ along $\RR^+\times\{{\bf0}\}$.
We fix  $\bar t>0$ and employ the following notations:
\[r:=|\bx|,\qquad \xi:=\textstyle\frac{r^2}{4\mu t}, \qquad A:=(4\pi\mu t)^\frac{n}{2},\qquad \bar A:=(4\pi\mu \bar t)^\frac{n}{2}.\]
Then, for $t>0$ and $\bx\neq0$, the radial part and the $i$th component of $\bar \bu(t,\bx)$ are
\beq\label{uu_i}
	\bar u(t,r)=\textstyle\frac{r}{t(1+Ae^\xi)}, \qquad\text{and}\qquad 
	\bar u_i(t,\bx)=\textstyle\frac{\bar u(t,r)}{r}x_i,
\eeq
respectively. In particular, $\bu(t,\bx)$ is continuous along $\RR^+\times\{{\bf0}\}$, i.e.,
\beq\label{cont}
	\lim_{{\bx\to{\bf 0}}\atop{t\to\bar t}}\bar \bu(t,\bx)={\bf0}\qquad\text{for $\bar t>0$.}
\eeq
Direct calculations using \eq{uu_i}${}_1$ give
\beq\label{fracs}
	\textstyle\frac{\bar u(t,r)}{r}=\textstyle\frac{1}{t(1+Ae^\xi)},\qquad
	\big(\textstyle\frac{\bar u(t,r)}{r}\big)_r=-\textstyle\frac{Are^\xi}{2\mu t^2(1+Ae^\xi)^2},
	\qquad\text{and}\qquad 
	\textstyle\frac{1}{r}\big[\frac{1}{r}\big(\textstyle\frac{\bar u(t,r)}{r}\big)_r\big]_r
	=\textstyle\frac{Ae^\xi(Ae^\xi-1)}{4\mu^2t^3(1+Ae^\xi)^3},
\eeq
so that
\beq\label{1st_id}
	\lim_{{\bx\to{\bf 0}}\atop{t\to\bar t}}{\textstyle\frac{\bar u(t,r)}{r}=\textstyle\frac{1}{\bar t}\frac{1}{1+\bar A}},\qquad
	\lim_{{\bx\to{\bf 0}}\atop{t\to\bar t}}\big({\textstyle\frac{\bar u(t,r)}{r}}\big)_r=0,
	\qquad\text{and}\qquad 
	\lim_{{\bx\to{\bf 0}}\atop{t\to\bar t}}\textstyle\frac{1}{r}\big[\frac{1}{r}\big(\textstyle\frac{\bar u(t,r)}{r}\big)_r\big]_r
	=\textstyle\frac{\bar A(\bar A-1)}{4\mu^2\bar t^3(1+\bar A)^3}.
\eeq
For later reference we note that, since $r\mapsto\frac{\bar u(t,r)}{r}$ is smooth and even 
(when regarded as a function of $r\in\RR$ for $t>0$ fixed), \eq{fracs}${}_{1,2}$ and \eq{1st_id}${}_1$ give
\beq\label{useful}
	\big({\textstyle\frac{\bar u(\bar t,r)}{r}}\big)_r=O(r)\qquad\text{and}\qquad
	\big|\textstyle\frac{\bar u(\bar t,r)}{r}-\frac{1}{\bar t}\frac{1}{1+\bar A}\big|=O(r^2)
	\qquad\text{as $r\to0$.}
\eeq
With $\del_j\equiv\del_{x_j}$ we have from \eq{uu_i}${}_2$, with $t>0$ and $\bx\neq0$, that
\beq\label{1st_spatials_a}
	\del_j\bar u_i(t,\bx)=\big(\textstyle\frac{\bar u(t,r)}{r}\big)_r\frac{x_ix_j}{r}+\frac{\bar u(t,r)}{r}\delta_{ij},
\eeq
and \eq{1st_id}${}_{1,2}$ give
\beq\label{1st_spatials_b}
	\lim_{{\bx\to0}\atop{t\to\bar t}}\del_j\bar u_i(t,\bx)=\textstyle\frac{1}{\bar t}\frac{1}{1+\bar A}\delta_{ij}.
\eeq
Recalling that $\bar u_i(\bar t,{\bf 0})=0$,  \eq{uu_i}${}_2$ gives ($\hat{\bf e}_j=$ $j$th unit coordinate vector)
\[\del_j\bar u_i(\bar t,{\bf 0})=\lim_{h\to0}{\textstyle\frac{1}{h}\bar u_i(\bar t,h\hat{\bf e}_j)}
=\lim_{h\to0}{\textstyle\frac{\bar u(\bar t,|h|)}{|h|}\delta_{ij}}
=\textstyle\frac{1}{\bar t}\frac{1}{1+\bar A}\delta_{ij}.\]
Comparing this with \eq{1st_spatials_b} shows that $D_\bx\bar \bu$ exists 
and is continuous at all points of $\RR^+\times\{{\bf0}\}$.

Next, applying  $\del_{k}$ to \eq{1st_spatials_a} gives, for $t>0$ and $\bx\neq0$, that
\beq\label{2nd_spatials_a}
	\del_{jk}\bar u_i(t,\bx)=\textstyle\frac{1}{r}\big[\frac{1}{r}\big(\textstyle\frac{\bar u(t,r)}{r}\big)_r\big]_rx_ix_jx_k
	+\frac{1}{r}\big(\textstyle\frac{\bar u(t,r)}{r}\big)_r\big[x_i\delta_{jk}+x_j\delta_{ik}+x_k\delta_{ij}\big].
\eeq
According to \eq{1st_id}${}_{2,3}$ we therefore have
\beq\label{2nd_spatials_b}
	\lim_{{\bx\to {\bf 0}}\atop{t\to\bar t}}\del_{jk}\bar u_i(t,\bx)=0.
\eeq
To calculate $\del_{jk}\bar u_i(\bar t,{\bf 0})$ we first note that, by \eq{1st_spatials_a}, we have
\[\del_j\bar u_i(\bar t,h\hat{\bf e}_k)-\del_j\bar u_i(\bar t,{\bf 0})
=\big(\textstyle\frac{\bar u}{r}\big)_r\big|_{(\bar t,|h|)}\cdot|h|\delta_{ik}\delta_{jk}
+\big(\textstyle\frac{\bar u}{r}\big|_{(\bar t,|h|)}-\textstyle\frac{1}{\bar t(1+\bar A)}\big)\delta_{ij},\]
so that \eq{fracs}${}_1$, \eq{1st_id}${}_2$, and \eq{useful} give
\[\del_{jk}\bar u_i(\bar t,{\bf 0})
:=\lim_{h\to0}\textstyle\frac{1}{h}\big[\del_j\bar u_i(\bar t,h\hat{\bf e}_k)-\del_j\bar u_i(\bar t,{\bf 0})\big]=0.\]
Comparing with \eq{2nd_spatials_b} we conclude that all second partials $\del_{jk}\bar \bu$
exist, vanish, and are continuous at all points of $\RR^+\times\{{\bf0}\}$.

For the $t$-derivative of $\bar \bu$, it is immediate from \eq{u_def} that
$\bar \bu_t(\bar t,{\bf 0})={\bf 0}$. Finally, since $\bar \bu$ by construction satisfies \eq{c} 
at points away from $\RR^+\times\{{\bf0}\}$, the calculations above give
\[\lim_{{\bx\to{\bf 0}}\atop{t\to\bar t}}\bar \bu_t(t,\bx)
=\lim_{{\bx\to{\bf 0}}\atop{t\to\bar t}}\big[\Delta\bar \bu(t,\bx)-(D_\bx\bar \bu(t,\bx))\bar \bu(t,\bx)\big]={\bf 0},\]
where the last equality follows from \eq{cont}, \eq{1st_spatials_b}, and \eq{2nd_spatials_b}.
Thus, $\bar \bu_t$ exists, vanishes, and is continuous at all points of $\RR^+\times\{{\bf0}\}$.
In particular, \eq{c} is classically satisfied along $\RR^+\times\{{\bf0}\}$, and therefore
on all of $\RR^+\times\RR^n$.

This concludes the argument for showing that the vector field $\bar \bu(t,\bx)$ defined in 
\eq{soln_ex1}-\eq{u_def} is an $L^p$-acceptable solution of \eq{c} with vanishing initial data.
Recalling that $a>0$ is an arbitrary constant in the argument above, we have
proved:
\begin{proposition}\label{main}
	Consider the Cole system {\em \eq{c}} in dimension $n\geq 2$ and assume $1\leq p<n$.
	Then there are infinitely many non-trivial $L^p$-acceptable solutions 
	to \eq{c} with vanishing initial data.
\end{proposition}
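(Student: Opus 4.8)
The plan is to exhibit an explicit one-parameter family of solutions and verify each satisfies Definition \ref{acceptable}. First I would fix a constant $a>0$ and apply the Cole--Hopf transform \eq{ch_rad} to the strictly positive radial heat function $\theta=a+G_n$ of \eq{heat_soln}; this produces the profile $\bar u(t,r)$ in \eq{soln_ex1}, from which one builds the vector field $\bar\bu=\bar\bu^{(a)}$ via \eq{u_def}, extended by ${\bf 0}$ along $\RR^+\times\{{\bf0}\}$. Since $\theta$ is smooth and positive on $\RR^+\times\RR$, the identity \eq{ch_rad} guarantees that $\bar\bu$ solves \eq{c} classically on $\RR^+\times(\RR^n\smallsetminus\{{\bf0}\})$, so the whole task reduces to checking the three items of Definition \ref{acceptable} — and, for (i)--(ii), only at points of $\RR^+\times\{{\bf0}\}$, since $\bx\mapsto\bx/|\bx|$ is discontinuous only there.

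For item (iii), attainment of the vanishing data in $L^p$, the point is that $\|\bar\bu(t)\|_p^p$ equals the radial integral \eq{key_int}, which as $t\to0+$ is of the indeterminate form $0\cdot\infty$: the prefactor $t^{-p}\to\infty$, while the integral collapses because the denominator $(1+a(4\pi\mu t)^{n/2}e^{r^2/4\mu t})^p$ blows up pointwise in $r$. I would resolve this by the substitution $s=r^2/4\mu t$ and then splitting the $s$-integral at $s=\log\frac{1}{A(t)}$, $A(t)=bt^{n/2}$: on the bounded piece the denominator is $\geq1$ and one is left with a power of $t$ times a power of $\log\frac1t$, which vanishes; on the tail one dominates $s^k$ by $e^{\beta s}$ for a small $\beta<\min(2q/n,\ell)$ and extracts a surplus power $t^{q-\beta n/2}\to0$. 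This is precisely the content of Lemmas \ref{lem_1}--\ref{lem_2} (hence Lemma \ref{prop_1}), and the parameter constraints there ($k>-1$, $q>0$, $d>-\tfrac{c+1}{2}$) translate exactly into the hypothesis $1\leq p<n$ through $c=p+n-1$, $d=-p$, $\ell=p$. I expect this indeterminate-form estimate to be the main obstacle: it is the only genuinely analytic step and the only place the range $p<n$ enters.

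For items (i) and (ii) the key structural observation is that $\bar u(t,r)/r$, viewed as a function of $r\in\RR$ at fixed $t>0$, is smooth and even, so it and its relevant $r$-derivatives have the expansions recorded in \eq{useful}. Feeding these into the componentwise formulas \eq{1st_spatials_a} and \eq{2nd_spatials_a} for $\del_j\bar u_i$ and $\del_{jk}\bar u_i$, I would (a) take the limits of these expressions as $(t,\bx)\to(\bar t,{\bf0})$ and (b) compute the one-sided difference quotients defining $\del_j\bar u_i(\bar t,{\bf0})$ and $\del_{jk}\bar u_i(\bar t,{\bf0})$ directly from \eq{u_def}, and check they match: $D_\bx\bar\bu$ extends continuously with value $\tfrac{1}{\bar t(1+\bar A)}{\bf I}_n$ at the origin, and all second partials vanish and are continuous there. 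Finally $\bar\bu_t(\bar t,{\bf0})={\bf0}$ is immediate from \eq{u_def}, and using \eq{c} away from the origin together with the continuity of $\Delta\bar\bu$ and $(D_\bx\bar\bu)\bar\bu$ just obtained, $\bar\bu_t$ also extends continuously (with value ${\bf0}$) to $\RR^+\times\{{\bf0}\}$; hence \eq{c} holds classically on all of $\RR^+\times\RR^n$. Combining the three items, each $\bar\bu^{(a)}$ is $L^p$-acceptable with data ${\bf0}$.

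To conclude, I would note that $a\mapsto\bar\bu^{(a)}$ is injective on $(0,\infty)$: for fixed $t>0$ and $r>0$ the value $\bar u(t,r)$ in \eq{soln_ex1} is strictly decreasing in $a$, so distinct choices of $a$ give genuinely distinct, non-trivial solutions. This produces infinitely many $L^p$-acceptable solutions with vanishing initial data, proving the proposition.
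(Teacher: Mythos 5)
Your proposal is correct and follows essentially the same route as the paper: Cole--Hopf applied to $\theta=a+G_n$, the $L^p$ limit via the split integral of Lemmas \ref{lem_1}--\ref{lem_2}, the matching of difference quotients and limits of \eq{1st_spatials_a}--\eq{2nd_spatials_a} at the origin using the evenness of $\bar u/r$, and the family parametrized by $a>0$. Your explicit remark that $a\mapsto\bar\bu^{(a)}$ is injective (monotonicity of \eq{soln_ex1} in $a$) is a small but welcome addition the paper leaves implicit.
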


\section{Self-similar and stationary solutions}\label{rad_ss_statn}
In Definition \ref{acceptable} we required that an ``$L^p$-acceptable'' solution
should solve \eq{c} classically at all points of $\RR^+\times\RR^n$. In particular,
a radial $L^p$-acceptable solution must vanish identically along $\RR^+\times\{{\bf 0}\}$.
In contrast, the recent work \cite{jlz} introduces several definitions of weak solutions
of \eq{c} which allow for singularities (e.g., infinite amplitudes) to be present.
(We comment further on the findings of \cite{jlz} in Section \ref{hi_reg} when 
we consider higher regularity estimates.)

In this section we consider two further examples of $L^p$-non-uniqueness in this latter setting
where one of the solutions blows up along $\RR^+\times\{{\bf 0}\}$. The first of these come about by searching for  
self-similar solutions. In this case we obtain examples much like that in Section \ref{Lp_non-uniq}: 
There is an infinity of self-similar and non-vanishing solutions 
$\bu_\text{ss}(t,\bx)$, all of which tend to $\bf 0$ in $L^p(\RR^n)$ as $t\to0+$ 
(again for $1\leq p<n$). On the other hand, these solutions contain a singularity along $\RR^+\times\{{\bf 0}\}$.

For the second example we restrict to $n=3$ for convenience, and search for a
non-unique continuation of singular initial data. 
A simple instance occurs by prescribing the  stationary profile $\bu_\text{st}(\bx)=\frac{2\mu}{|\bx|^2}\bx$ as initial data. 
The situation is now in a sense opposite to that of the self-similar solutions described above:
We identify a single non-stationary solution, which is a classical solution to 
\eq{c} on $\RR^+\times\RR^n$,
and which tends to the singular stationary solution $\bu_\text{st}(\bx)$
in $L^p(\RR^n)$ ($1\leq p<n$) as $t\to0+$.

Since these examples of non-uniqueness involves singular solution (in particular, they are not 
$L^p$-acceptable according to Definition \ref{acceptable}), we regard them as  
somewhat weaker than the solution $\bar\bu(t,\bx)$ considered in Section \ref{Lp_non-uniq}.

\subsection{$L^p$-non-uniqueness via self-similar solutions}\label{rad_ss}
The scaling invariance for \eq{c} noted earlier is inherited by the 
radial Cole equation \eq{c_rad}: If $u(t,r)$ solves \eq{c_rad}, then so does 
\[u_\lambda(t,r):=\lambda u(\lambda^2 t,\lambda r)\qquad\text{for any $\lambda>0$.}\]
A {\em self-similar solution} $u(t,r)$ is required to satisfy  $u\equiv u_\lambda$ for all $\lambda>0$.
It follows that a self-similar solution must  be of the form
\beq\label{xi_version}
	u(t,r)=\textstyle\frac{1}{\sqrt{t}}\vp(\eta)\qquad\qquad \eta:=\textstyle\frac{r}{\sqrt{t}}.
\eeq
Solutions to the radial Cole equation \eq{c_rad} of this type
may be obtained by applying the Cole-Hopf transform \eq{ch}
to solutions of the form $\theta(t,r)=f(\eta)$ of the $n$-dimensional 
radial heat equation \eq{rad_heat}:
\beq\label{u_ss_ch}
	u_{\text{ss}}(t,r)=-\textstyle\frac{2\mu}{\sqrt{t}}\frac{f'(\eta)}{f(\eta)}.
\eeq

We proceed to identify a class of self-similar radial solutions to \eq{c_rad} by 
determining the general form of $f$ in \eq{u_ss_ch}. Substitution of $\theta(t,r)=f(\eta)$ in 
\eq{rad_heat} yields the ODE
\[f''=-(\textstyle\frac{\eta}{2\mu}+\frac{n-1}{\eta})f'\qquad('\equiv\frac{d}{d\eta}),\]
which can be integrated twice to give
\beq\label{f'}
	f'(\eta)=K\eta^{1-n}e^{-\frac{\eta^2}{4\mu}}
\eeq
and (changing integration variable to $s=\frac{\eta^2}{4\mu}$)
\beq\label{f}
	f(\eta)=A-K\int_\eta^\infty z^{1-n}e^{-\frac{z^2}{4\mu}}\, dz
	=A-{\textstyle\frac{K}{2}}(4\mu)^{1-\frac{n}{2}}\int_\frac{\eta^2}{4\mu}^\infty s^{-\frac{n}{2}}e^{-s}\, ds,
\eeq
where $A$ and $K$ are arbitrary constants of integration. Substituting into \eq{u_ss_ch}
and setting 
\[a:=-\textstyle\frac{2A}{K(4\mu)^{1-\frac{n}{2}}},\]
yield the solution
\beq\label{u_ss}
	u_{\text{ss}}(t,r)=(4\mu)^\frac{n}{2}t^{\frac{n}{2}-1}r^{1-n}\cdot
	\frac{e^{-\frac{r^2}{4\mu t}}}{a+\int_\frac{r^2}{4\mu t}^\infty s^{-\frac{n}{2}}e^{-s}\, ds}, 
\eeq
to the radial Cole equation \eq{c_rad}. Here $a$ is an arbitrary constant which we 
choose positive in order that $u_{\text{ss}}(t,r)$ takes bounded and positive values whenever $r,t>0$.

However, differently from the solution $\bar u$ in \eq{soln_ex1}, $u_{\text{ss}}(t,r)$ (for $n\geq 2$) 
blows up like $\sim\frac{1}{r}$ as $r\to0+$ with $t>0$ fixed; this follows from \eq{u_ss} and L'H\^opital's rule.
(Note that $a>0$ requires $A$ and $K$ to have opposite signs, so that the heat function $\theta(t,r)=f(\eta)$
in \eq{f} does not change sign. Thus, the singularity in $u_{\text{ss}}(t,r)$ along $r=0$ is not due to a change 
of sign in the underlying heat function.)
A plot of the solution \eq{u_ss} with $n=3$, $\mu=0.005$ and 
$a=1$ is given in Figure \ref{Figure_2}.

Just as for the solutions in \eq{soln_ex1}, we have
\[\lim_{{r\to\bar r}\atop{t\to0+}}u_{\text{ss}}(t,r)=0 \qquad\text{for $\bar r>0$ fixed,}\]
and $u_{\text{ss}}(t,\sqrt{t})\to\infty$ as $t\to0+$. Thus, the convergence to 
vanishing initial data is again pointwise but not uniform.

\begin{figure}
	\centering
	\includegraphics[width=9cm,height=9cm]{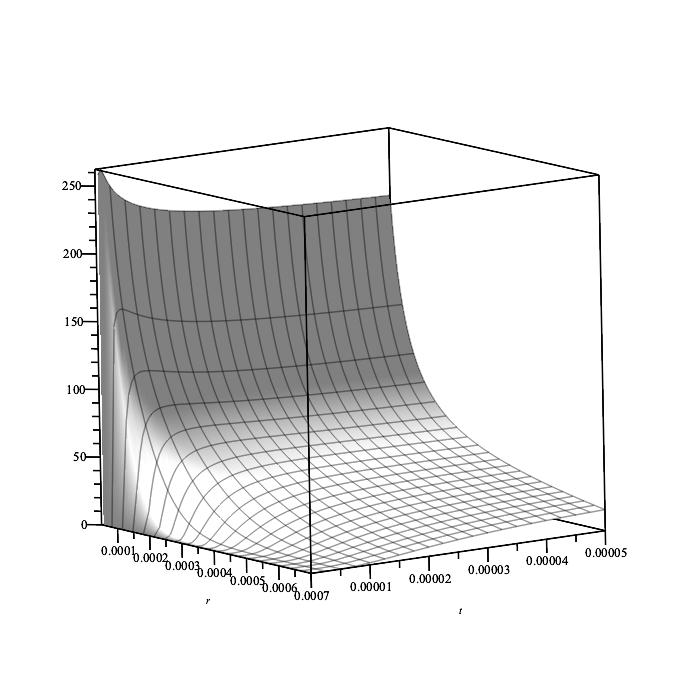}
	\caption{Maple plot of the solution \eq{u_ss} in the case $n=3$, with $\mu=0.005$ and $a=1$.
	The plot is for $0.00005\leq r\leq 0.0007$ and $0\leq t\leq 0.00005$.}\label{Figure_2}
\end{figure} 

Defining the corresponding vector field
\[\bu_{\text{ss}}(t,\bx):=u_{\text{ss}}(t,|\bx|)\textstyle\frac{\bx}{|\bx|} \qquad\text{for $t>0,|\bx|>0$}\]
(and $\bu_{\text{ss}}(t,\bf 0)\equiv\bf 0$, say), we proceed to show that $\bu_{\text{ss}}$
attains vanishing initial data in the sense of $L^p$ whenever $1\leq p<n$.
Changing to integration variable $z=\frac{r^2}{4\mu t}$ we get
\beq\label{expr}
	\|\bu_{\text{ss}}(t)\|_p^p\propto\int_0^\infty|u_{\text{ss}}(t,r)|^pr^{n-1}\, dr 
	\propto t^{\frac{n-p}{2}}\int_0^\infty 
	\frac{z^\frac{(1-n)(p-1)-1}{2}e^{-pz}}{\big[a+\int_z^\infty s^{-\frac{n}{2}}e^{-s}\, ds\big]^p}\, dz.
\eeq
It is clear that the integrand in the last expression in \eq{expr} is integrable at $z=\infty$, and a
calculation using L'H\^opital's rule shows that it is of order $z^{\frac{n-p}{2}-1}$ for $z\approx 0+$.
The integral is therefore finite whenever $1\leq p<n$, and \eq{expr} gives
$\|\bu_{\text{ss}}(t)\|_p\to0$ as $t\to0+$ for any choice of of the constant $a>0$. 


We conclude: Self-similar solutions provide further examples of $L^p$-non-uniqueness
in the Cauchy problem for the $n$-dimensional Cole system whenever $1\leq p<n$.

However, due to their singular behavior along $\RR^+\times\{\bf 0\}$, the self-similar 
solutions $\bu_\text{ss}(t,\bx)$ are not $L^p$-acceptable in the sense of Definition 
\ref{acceptable}.

\subsection{$L^p$-non-uniqueness of stationary solutions in 3-d}\label{statn}
In this section we consider a different situation for the case $n=3$.
The goal is to give an example of $L^p$-non-uniqueness for a particular stationary 
Cole solution $\bu_\text{st}(\bx)$.
The latter will be singular along $\RR^+\times\{\bf 0\}$, and is therefore
not itself an $L^p$-acceptable solution according to Definition 
\ref{acceptable}. On the other hand, we shall identify a non-stationary and $L^p$-acceptable 
solution to the Cauchy problem with initial data $\bu_\text{st}$ (in the sense of $L^p$ for $1\leq p<3$).

We start by listing the stationary solutions of the radial heat equation \eq{rad_heat} 
in dimensions $n\geq 2$:
\[\theta(r)=
	\left\{
	\begin{array}{ll}
		C_1\log r+C_2 & \text{for $n=2$}\\\\
		C_1r^{2-n}+C_2& \text{for $n\geq 3$,}
	\end{array}
	\right.
\]
where $C_1,C_2$ are arbitrary constants.
Applying the Cole-Hopf transform \eq{ch_rad} we obtain the corresponding 
non-trivial and singular stationary solutions to the radial Cole equation \eq{c_rad}:
\beq\label{statn_c_soln}
	u(r)=
	\left\{
	\begin{array}{ll}
		-\textstyle\frac{2\mu}{r(\log r+C)} & \text{for $n=2$}\\\\
		\textstyle\frac{2(n-2)\mu}{r(1+Cr^{n-2})}& \text{for $n\geq 3$,}
	\end{array}
	\right.
\eeq
where $C$ is an arbitrary constant.
\begin{remark}
	The solutions in \eq{statn_c_soln} do not exhaust the family of 
	stationary radial Cole solutions. For a complete list in the case
	$n=3$, see Theorem 2.1 in \cite{jlz}.
\end{remark}
Specializing to $n=3$ we single out the 3-d stationary solution with $C=0$ in \eq{statn_c_soln}, i.e.,
\[u_\text{st}(r):=\textstyle\frac{2\mu}{r},\]
and define
\beq\label{u_st_def}
	\bu_\text{st}(\bx):=u_\text{st}(|\bx|)\textstyle\frac{\bx}{|\bx|}\qquad \text{for $\bx\neq\bf 0$},
\eeq
and $\bu_\text{st}(\bf0)\equiv\bf0$, say.
To identify a non-stationary Cole solution taking $\bu_\text{st}$ as initial data in the sense of 
$L^p$ (this solution will be denoted $\bu_\text{nst}$ below), we exploit that the 3-d radial heat equation 
is reducible to the 1-d heat equation.
Specifically, to solve \eq{rad_heat} with $n=3$ it is
convenient to set
\beq\label{Theta_theta}
	\Theta(t,r):=r\theta(t,r),
\eeq
which yields the 1-d heat equation for $\Theta$: $\Theta_t=\mu\Theta_{rr}$ for $r>0$.
With $\Theta$ determined, \eq{Theta_theta} and \eq{ch_rad} 
give the following expression for the corresponding solution $u(t,r)$ to the 3-d radial Cole 
equation:
\beq\label{u_Theta}
	u(t,r)=2\mu\big(\textstyle\frac{1}{r}-\textstyle\frac{\Theta_r(t,r)}{\Theta(t,r)}\big)
	\qquad\qquad (n=3),
\eeq
($u(t,r)$ is therefore the sum of the 3-d stationary solution $u_\text{st}$
and the Cole-Hopf transform of a solution to the 1-d heat equation, 
i.e., a solution to Burgers equation $b_t+bb_r=\mu b_{rr}$.)

We are free to assign boundary conditions along $r=0$ for $\Theta$. 
In order that the corresponding radial Cole-solution $\bu(t,\bx)$ be $L^p$-acceptable,
we need $u$ to vanish along $r=0$ (cf. Remark \ref{rmk_defn}).
We should therefore choose $\Theta$ to vanish along $r=0$ as well.
To see this, at least at a formal level, note that $\Theta(t,0)\equiv 0$ yields 
$\Theta_{rr}(t,0)=\mu^{-1}\Theta_t(t,0)\equiv 0$,
so that $\Theta(t,r)=r\Theta_r(t,0)+O(r^3)$, and a further Taylor expansion about $r=0$ gives
\[u(t,r)=2\mu\big(\textstyle\frac{1}{r}-\textstyle\frac{\Theta_r(t,r)}{\Theta(t,r)}\big)
=O(r)\qquad\text{as $r\downarrow0$.}\]
As a concrete example, consider the 1-d heat function $\Theta$ obtained 
by imposing $\Theta(t,0)\equiv 0$ together with the (non-compatible) initial data 
$\Theta(0,r)\equiv1$. The solution in this case is 
\[\Theta(t,r)=\erf(\textstyle\frac{r}{\sqrt{4\mu t}}),\]
where $\erf$ denotes the error function, 
\[\erf(z)={\textstyle\frac{2}{\sqrt\pi}}\int_0^z e^{-s^2}\, ds.\]
According to \eq{u_Theta} the corresponding non-stationary solution to the radial Cole 
equation \eq{c_rad} is given by
\beq\label{u_nst}
	u_\text{nst}(t,r)=2\mu\Big(\textstyle\frac{1}{r}-
	\textstyle\frac{\exp(-\frac{r^2}{4\mu t})}{\sqrt{\mu\pi t}\erf(\textstyle\frac{r}{\sqrt{4\mu t}})}\Big).
\eeq
A concrete plot is shown in Figure \ref{Figure_3}; as in the case of the solution in \eq{soln_ex1},
$u_\text{nst}$ is a decaying and expanding viscous wave.
\begin{figure}
	\centering
	\includegraphics[width=9cm,height=9cm]{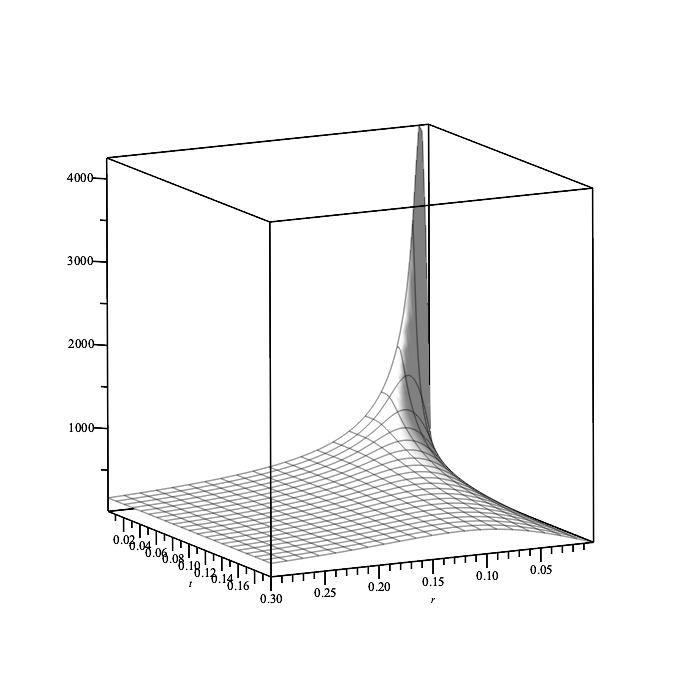}
	\caption{Maple plot of the solution \eq{u_nst} to the 3-d Cole equation \eq{c_rad} 
	with $n=3$ and $\mu=0.01$.
	The plot is for $0.001\leq r\leq 0.3$ and $0.001\leq t\leq 0.2$.}\label{Figure_3}
\end{figure} 

We define the corresponding non-stationary solution $\bu_\text{nst}:\RR^+\times\RR^3\to\RR^3$ 
to the Cole system \eq{c}  by 
\beq\label{u_nst_def}
	\bu_\text{nst}(t,\bx):=\left\{
	\begin{array}{ll}
		u_\text{nst}(t,|\bx|)\textstyle\frac{\bx}{|\bx|} & \text{for $t>0,|\bx|>0$}\\\\
		{\bf 0} & \text{for $t>0,\bx={\bf 0}$.}
	\end{array}\right.
\eeq 
By construction, $\bu_\text{nst}$ is smooth and satisfies \eq{c} 
classically on $\RR^+\times (\RR^3\smallsetminus\{{\bf0}\})$.
By Taylor expanding $\erf$ and $\exp$ about zero, it is straightforward to verify that 
\[\lim_{{r\to 0+}\atop{t\to\bar t}} u_\text{nst}(t,r)=0\qquad\text{for any $\bar t>0$,}\]
so that, according to \eq{u_nst_def}, $\bu_\text{nst}$ is at least continuous on $\RR^+\times\RR^3$.

Next, for $\bar r>0$ fixed, \eq{u_nst} gives
\[\lim_{{r\to \bar r}\atop{t\to0+}}u_\text{nst}(t,r)= \textstyle\frac{2\mu}{\bar r},\]
so that $\bu_\text{nst}(t)\to\bu_\text{st}$ in a pointwise manner as $t\to0+$.
On the other hand, evaluating along $r=\sqrt{t}$ gives
\[|u_\text{nst}(t,\sqrt{t})-u_\text{ss}(\sqrt{t})|=\textstyle\frac{C_\mu}{\sqrt t},\]
where $C_\mu>0$, so that the convergence $u_\text{nst}(t)\to u_\text{ss}$ 
as $t\to0+$ is pointwise but not uniform. 

We proceed to verify that $\bu_\text{nst}(t)\to\bu_\text{st}$ in $L^p(\RR^3)$ whenever 
$1\leq p<3$. We have, upon changing integration variable to $s=\frac{r}{\sqrt{4\mu t}}$,
\begin{align}
	\|\bu_\text{nst}(t)-\bu_\text{st}\|_p^p
	&\propto \int_0^\infty|u_\text{nst}(t,r)-u_\text{st}(r)|^pr^2\, dr\nn\\
	&\propto t^{-\frac{p}{2}}\int_0^\infty 
	\textstyle\frac{\exp(-p\frac{r^2}{4\mu t})r^2}{\big(\erf(\frac{r}{\sqrt4\mu t})\big)^p}\, dr\nn\\
	& \propto t^{\frac{3-p}{2}}\int_0^\infty 
	\textstyle\frac{\exp(-ps^2)s^2}{(\erf(s))^p}\, ds.\label{last}
\end{align}
The integrand in the last expression is clearly integrable at $s=+\infty$; as $\erf(s)=O(s)$
as $s\to0+$, it is integrable at $s=0+$ provided $s^{2-p}$ is integrable, i.e., provided $p<3$,
in which case \eq{last} gives that $\|\bu_\text{nst}(t)-\bu_\text{st}\|_{L^p(\RR^3)}\to 0$
as $t\to0+$.

Finally, calculations similar to those in Section \ref{Lp_non-uniq} (omitted) show that $\bu_\text{nst}$
is an $L^p$-acceptable solution to the Cauchy problem for \eq{c} with initial data $\bu_\text{st}$
according to Definition \ref{acceptable}.

We conclude: For $n=3$ the solution $\bu_\text{nst}$, defined in \eq{u_nst}-\eq{u_nst_def}, 
provides an example of $L^p$-non-uniqueness in the Cauchy problem for the $3$-dimensional 
Cole system whenever $1\leq p<3$.

Again, however, this example is somewhat weaker than the one in
Section \ref{Lp_non-uniq} since the initial data is a stationary solution which
is not  $L^p$-acceptable in the sense of Definition \ref{acceptable}.

\section{Higher regularity}\label{hi_reg}
In this section we briefly consider higher regularity estimates of the solutions 
$\bar\bu$, $\bu_\text{ss}$, $\bu_\text{nst}$ constructed above.
As noted, the Cole system \eq{c} is an example of a 
symmetric, semilinear, and uniformly parabolic system, for which 
the Cauchy problem is uniquely solvable in $H^s(\RR^n)$ whenever
$s>1+\frac{n}{2}$ \cites{vh,kaw,ser,tay}. The estimates we establish below for 
the concrete solution $\bar\bu(t,\bx)$ show that it is rather far away 
from this regularity class. The same applies for the regularity class 
considered in \cite{jlz}, where perturbations of stationary 
solutions $\phi(r)$, behaving like $\sim\frac 1 r$ near $r=0$, are measured 
in $W^{1,\infty}(\RR^n)$. Thus, there is no contradiction between these results 
and the non-uniqueness results established in the present work.

We restrict attention to the solution $\bar\bu$ from Section \ref{Lp_non-uniq} and 
estimate its $W^{1,p}(\RR^n)$- and $W^{2,p}(\RR^n)$-distances to the 
trivial solution $\bu\equiv\bf 0$. We shall show that $\bar\bu$ yields 
non-uniqueness in these spaces provided that $1\leq p<\frac n 2$ and $1\leq p<\frac n 3$,
respectively. In particular, for $p=2$, non-uniqueness requires spatial
dimensions $n\geq 5$ and $n\geq 7$, respectively. 

Recalling the result in Lemma \ref{prop_1}, the claims above amount to: $\|\del_{j}\bar u_i(t)\|_p\to0$
as $t\to0+$ provided $1\leq p<\frac n 2$, and  $\|\del_{jk}\bar u_i(t)\|_p\to0$ 
as $t\to0+$ provided $1\leq p<\frac n 3$, for all indices $1\leq i,j,k\leq n$.
To argue for this, we first note that the general formulae \eq{1st_spatials_a} and \eq{2nd_spatials_a} give  
\[|\del_j\bar u_i(t,\bx)|\lesssim |\bar u_r|+ |\textstyle\frac{\bar u}{r}|\]
and
\[|\del_{jk}\bar u_i(t,\bx)|\lesssim |\bar u_{rr}|
+ |\textstyle\frac{\bar u_r}{r}|+ |\textstyle\frac{\bar u}{r^2}|,\]
where $\bar u$ and its derivatives are evaluated at $(t,r)$.
With $\bar u$ given by \eq{soln_ex1}, and setting 
\[f(t,r)=1+a(4\pi\mu t)^\frac{n}{2}\exp(\textstyle\frac{r^2}{4\mu t}),\]
we have 
\[\bar u(t,r)= \textstyle\frac{r}{tf(t,r)},\]
and direct calculations give, in turn,
\[|\textstyle\frac{\bar u}{r}|\lesssim \textstyle\frac{t^{-1}}{f(t,r)},\]
\[|\bar u_r|\lesssim |\textstyle\frac{\bar u}{r}|+\textstyle\frac{t^{-2}r^2}{f(t,r)}
\lesssim \textstyle\frac{t^{-1}}{f(t,r)}+\textstyle\frac{t^{-2}r^2}{f(t,r)},\]
and
\[|\bar u_{rr}|\lesssim |\textstyle\frac{\bar u_r}{r}|+|\textstyle\frac{\bar u}{r^2}|
+\textstyle\frac{t^{-2}r}{f(t,r)}+\textstyle\frac{t^{-3}r^3}{f(t,r)}
\lesssim \textstyle\frac{t^{-1}r^{-1}}{f(t,r)}+\textstyle\frac{t^{-2}r}{f(t,r)}
+\textstyle\frac{t^{-3}r^3}{f(t,r)},\]
where we have repeatedly used that $t^\frac{n}{2}\exp(\frac{r^2}{4\mu t})\lesssim f(t,r)$.
Making use of these expressions we get,
\begin{align}
	\|\del_j\bar u_i(t)\|_p^p&\lesssim\int_0^\infty \big(|\bar u_r|^p+ |\textstyle\frac{\bar u}{r}|^p\big)r^{n-1}\,dr\nn\\
	&\lesssim t^{-p}\int_0^\infty {\textstyle\frac{r^{n-1}}{f(t,r)^p}}\, dr + t^{-2p}\int_0^\infty \textstyle\frac{r^{2p+n-1}}{f(t,r)^p}\, dr,
	\label{W1p}
\end{align}
and
\begin{align}
	\|\del_{jk}\bar u_i(t)\|_p^p&\lesssim\int_0^\infty \big(|\bar u_{rr}|^p+ |\textstyle\frac{\bar u_r}{r}|^p
	+ |\textstyle\frac{\bar u}{r^2}|^p\big)r^{n-1}\,dr\nn\\
	&\lesssim t^{-p}\int_0^\infty {\textstyle\frac{r^{n-p-1}}{f(t,r)^p}}\, dr + t^{-2p}\int_0^\infty{\textstyle\frac{r^{p+n-1}}{f(t,r)^p}}\, dr
	+ t^{-3p}\int_0^\infty \textstyle\frac{r^{3p+n-1}}{f(t,r)^p}\, dr.
	\label{W2p}
\end{align}
Applying Lemma \ref{lem_2} to each of the terms in \eq{W1p} and \eq{W2p}, we obtain the claims above, viz.
\[\|\del_j\bar u_i(t)\|_p\to 0\qquad\text{as $t\to0+$ provided $1\leq p<\textstyle\frac{n}{2}$,}\]
and
\[\|\del_{jk}\bar u_i(t)\|_p\to 0\qquad\text{as $t\to0+$ provided $1\leq p<\textstyle\frac{n}{3}$.}\]

\section*{Declarations}
\subsection*{Funding} No funding was received for conducting this study.

\subsection*{Conflict of interest} The author states that there is no conflict of interest.

\subsection*{Data availability} All data are available within the manuscript.

\section*{Acknowledgements}
The author is indebted to Anna Mazzucato for several helpful discussions.

\begin{bibdiv}
\begin{biblist}
\bib{aron}{article}{
   author={Aronson, D. G.},
   title={Non-negative solutions of linear parabolic equations},
   journal={Ann. Scuola Norm. Sup. Pisa Cl. Sci. (3)},
   volume={22},
   date={1968},
   pages={607--694},
   issn={0391-173X},
   review={\MR{0435594}},
}\bib{bp}{article}{
   author={Benton, Edward R.},
   author={Platzman, George W.},
   title={A table of solutions of the one-dimensional Burgers equation},
   journal={Quart. Appl. Math.},
   volume={30},
   date={1972},
   pages={195--212},
   issn={0033-569X},
   review={\MR{0306736}},
   doi={10.1090/qam/306736},
}
\bib{cole}{article}{
   author={Cole, Julian D.},
   title={On a quasi-linear parabolic equation occurring in aerodynamics},
   journal={Quart. Appl. Math.},
   volume={9},
   date={1951},
   pages={225--236},
   issn={0033-569X},
   review={\MR{0042889}},
   doi={10.1090/qam/42889},
}
\bib{dib}{book}{
   author={DiBenedetto, Emmanuele},
   title={Partial differential equations},
   series={Cornerstones},
   edition={2},
   publisher={Birkh\"auser Boston, Ltd., Boston, MA},
   date={2010},
   pages={xx+389},
   isbn={978-0-8176-4551-9},
   review={\MR{2566733}},
   doi={10.1007/978-0-8176-4552-6},
}
\bib{jlz}{article}{
   author={Ji, Shanming},
   author={Li, Zongguang},
   author={Zhu, Changjiang},
   title={Removable singularities and unbounded asymptotic profiles of
   multi-dimensional Burgers equations},
   journal={Math. Ann.},
   volume={391},
   date={2025},
   number={1},
   pages={113--162},
   issn={0025-5831},
   review={\MR{4846779}},
   doi={10.1007/s00208-024-02917-6},
}
\bib{js}{article}{
   author={Joseph, K. T.},
   author={Sachdev, P. L.},
   title={Initial boundary value problems for scalar and vector Burgers
   equations},
   journal={Stud. Appl. Math.},
   volume={106},
   date={2001},
   number={4},
   pages={481--505},
   issn={0022-2526},
   review={\MR{1825846}},
   doi={10.1111/1467-9590.00175},
}
\bib{kaw}{book}{
   author={Kawashima, Shuichi},
   title={Systems of a hyperbolic-parabolic composite type, with applications
	to the equations of magnetohydrodynamics},
   series={Doctoral Thesis, Kyoto University},
   date={1983},
   pages={iv+200},
}
\bib{light}{article}{
   author={Lighthill, M. J.},
   title={Viscosity effects in sound waves of finite amplitude},
   conference={
      title={Surveys in mechanics},
   },
   book={
      publisher={Cambridge, at the University Press, },
   },
   date={1956},
   pages={250--351 (2 plates)},
   review={\MR{0077346}},
}
\bib{mrrs1}{article}{
   author={Merle, Frank},
   author={Rapha\"el, Pierre},
   author={Rodnianski, Igor},
   author={Szeftel, Jeremie},
   title={On the implosion of a compressible fluid I: Smooth self-similar
   inviscid profiles},
   journal={Ann. of Math. (2)},
   volume={196},
   date={2022},
   number={2},
   pages={567--778},
   issn={0003-486X},
   review={\MR{4445442}},
   doi={10.4007/annals.2022.196.2.3},
}
\bib{mrrs2}{article}{
   author={Merle, Frank},
   author={Rapha\"el, Pierre},
   author={Rodnianski, Igor},
   author={Szeftel, Jeremie},
   title={On the implosion of a compressible fluid II: Singularity
   formation},
   journal={Ann. of Math. (2)},
   volume={196},
   date={2022},
   number={2},
   pages={779--889},
   issn={0003-486X},
   review={\MR{4445443}},
   doi={10.4007/annals.2022.196.2.4},
}
\bib{ps}{article}{
   author={Pol\'a\v cik, P.},
   author={\v Sver\'ak, V.},
   title={Zeros of complex caloric functions and singularities of complex
   viscous Burgers equation},
   journal={J. Reine Angew. Math.},
   volume={616},
   date={2008},
   pages={205--217},
   issn={0075-4102},
   review={\MR{2369491}},
   doi={10.1515/CRELLE.2008.022},
}
\bib{ser}{article}{
   author={Serre, Denis},
   title={Local existence for viscous system of conservation laws:
   $H^s$-data with $s>1+d/2$},
   conference={
      title={Nonlinear partial differential equations and hyperbolic wave
      phenomena},
   },
   book={
      series={Contemp. Math.},
      volume={526},
      publisher={Amer. Math. Soc., Providence, RI},
   },
   isbn={978-0-8218-4976-7},
   date={2010},
   pages={339--358},
   review={\MR{2731999}},
   doi={10.1090/conm/526/10388},
}
\bib{tao}{book}{
   author={Tao, Terence},
   title={Nonlinear dispersive equations},
   series={CBMS Regional Conference Series in Mathematics},
   volume={106},
   note={Local and global analysis},
   publisher={Conference Board of the Mathematical Sciences, Washington, DC;
   by the American Mathematical Society, Providence, RI},
   date={2006},
   pages={xvi+373},
   isbn={0-8218-4143-2},
   review={\MR{2233925}},
   doi={10.1090/cbms/106},
}
\bib{tay}{book}{
   author={Taylor, Michael E.},
   title={Partial differential equations III. Nonlinear equations},
   series={Applied Mathematical Sciences},
   volume={117},
   edition={3},
   publisher={Springer, Cham},
   date={[2023] \copyright 2023},
   pages={xxiii+755},
   isbn={978-3-031-33927-1},
   isbn={978-3-031-33928-8},
   review={\MR{4703941}},
   doi={10.1007/978-3-031-33928-8},
}
\bib{vh}{article}{
   author={Vol\cprime pert, A. I.},
   author={Hudjaev, S. I.},
   title={The Cauchy problem for composite systems of nonlinear differential
   equations},
   language={Russian},
   journal={Mat. Sb. (N.S.)},
   volume={87(129)},
   date={1972},
   pages={504--528},
   issn={0368-8666},
   review={\MR{0390528}},
}
\bib{wid}{book}{
   author={Widder, D. V.},
   title={The heat equation},
   series={Pure and Applied Mathematics},
   volume={Vol. 67},
   publisher={Academic Press [Harcourt Brace Jovanovich, Publishers], New
   York-London},
   date={1975},
   pages={xiv+267},
   review={\MR{0466967}},
}
\end{biblist}
\end{bibdiv}

\end{document}